\documentclass[11pt,a4paper]{amsart}

\usepackage[T1]{fontenc}
\usepackage[utf8]{inputenc}
\usepackage{lmodern}
\usepackage{amsmath,amsthm,amssymb,mathtools}
\usepackage{hyperref}
\usepackage[nameinlink,noabbrev]{cleveref}
\usepackage{enumitem}
\usepackage{microtype}
\usepackage{geometry}
\hypersetup{colorlinks=true,linkcolor=blue,citecolor=blue,urlcolor=blue}

\newtheorem{theorem}{Theorem}[section]
\newtheorem{corollary}[theorem]{Corollary}
\newtheorem{proposition}[theorem]{Proposition}
\newtheorem{lemma}[theorem]{Lemma}
\theoremstyle{definition}
\newtheorem{definition}[theorem]{Definition}
\newtheorem{remark}[theorem]{Remark}

\newcommand{\eqm}{\equiv_{\mathrm m}}
\newcommand{\lem}{\le_{\mathrm m}}
\newcommand{\lfo}{\le_{\mathrm{fo}}}
\newcommand{\ce}{c.e.}
\newcommand{\rng}{\operatorname{rng}}

\title[A strengthened form of D\"egtev's argument]
{A strengthened form of D\"egtev's argument and
\texorpdfstring{$D$}{D}-maximal many-one degrees}

\author{Patrizio Cintioli}
\address{Mathematics Division, School of Science and Technology,
University of Camerino, Italy}
\email{patrizio.cintioli@unicam.it}

\subjclass[2020]{Primary 03D30; Secondary 03D25.}
\keywords{many-one degrees, one-one reducibility, finite-one reducibility,
$D$-maximal sets, simple sets, computably enumerable sets}

\begin{document}

\begin{abstract}
D\"egtev proved that if a noncomputable computably enumerable set \(U\)
satisfies his property \((R)\) and the many-one degree of \(U\) contains no
simple set, then the \(1\)-degree of \(U\) is minimal. We show that his
proof yields the stronger conclusion that, for every noncomputable
\(A\lem U\),
\[
U\le_1 A.
\]
In particular, the injectivity of the reduction \(A\le_1 U\) with which
D\"egtev begins his proof is not needed. Using the
Cholak--Gerdes--Lange classification, we show that for a noncomputable \(D\)-maximal set
\(D\), the hypothesis that its many-one degree contains no simple set is
equivalent to \(D\) being of one of Types~3--10. Hence, for every such
\(D\) and every noncomputable \(A\lem D\), one has \(D\le_1 A\).
Together with Maslova's theorem for the simple-set cases, this recovers as
a corollary the existence of a least finite-one degree in every
noncomputable c.e.\ many-one degree containing a \(D\)-maximal set.
\end{abstract}

\maketitle

\section{Introduction}

Richter, Stephan, and Zhang asked whether every noncomputable many-one
degree has a least finite-one degree inside it \cite[p.~17]{RSZ}. The
question has a negative answer even among computably enumerable (\ce)
many-one degrees \cite{CintioliNoLeastFO}. A first version of the present
preprint established a positive answer for the subclass of c.e.\ many-one
degrees containing a \(D\)-maximal set, using finite-one arguments adapted
to the different cases in the Cholak--Gerdes--Lange classification.

After that version was posted, V.~Yu.~Shavrukov pointed out in
correspondence a stronger one-one statement for Types~3--10, drew attention
to D\"egtev's 1976 paper \cite{Degtev76}, and suggested that the stronger
conclusion is essentially implicit in D\"egtev's proof. D\"egtev's
Assertion~1 \cite[p.~751]{Degtev76} states that if a noncomputable c.e.\
set \(U\) satisfies property \((R)\) and its many-one degree contains no
simple set, then the \(1\)-degree of \(U\) is minimal. His proof starts
with a noncomputable set \(A\le_1 U\) and constructs a one-one reduction
of \(U\) to \(A\).

The purpose of the present revision is to verify Shavrukov's suggestion in
detail: the same construction works when the initial hypothesis
\(A\le_1 U\) is weakened to \(A\lem U\).

\begin{theorem}\label{thm:strengthened-degtev}
Let \(U\) be a noncomputable c.e.\ set satisfying property \((R)\), and
suppose that the many-one degree of \(U\) contains no simple set. If \(A\)
is noncomputable and
\[
A\lem U,
\]
then
\[
U\le_1 A.
\]
\end{theorem}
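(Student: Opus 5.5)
The plan is to run D\"egtev's construction from \cite{Degtev76}, starting from a many-one reduction instead of a one-one one, and to check at each step that injectivity of the initial reduction is never used. Fix a computable \(f\) with \(A=f^{-1}(U)\), and let \(R=\rng f\), a c.e.\ set. Since \(A\) is noncomputable, \(U\cap R\) is not computable relative to the enumeration of \(R\). More precisely, for every computable \(g\) with \(\rng g\subseteq R\), the set \(g^{-1}(U)\) is many-one equivalent to a noncomputable piece of \(U\). The target is a computable injective \(h\) with \(x\in U\iff h(x)\in A\). We build \(h\) in two stages: first a many-one reduction of \(U\) into \(R\), then a way of choosing distinct \(f\)-preimages.

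\emph{Stage 1: push \(U\) into \(R\).} We show that \(U\lem U\cap R\) via a computable function whose values all lie in \(R\). Property \((R)\) should be what drives this: applied to the c.e.\ set \(R\), it should say that either \(U\) is many-one reducible to its restriction to \(R\), or the part of \(U\) outside \(R\) carries all the complexity. In the second case we want to build a simple set in the many-one degree of \(U\), contradicting the hypothesis. The candidate is \(U\cup R\), or a suitable cylinder-free modification of it. Its complement avoids \(R\), and the noncomputability of \(A\) should give \(U\cup R\eqm U\). If that complement contained an infinite c.e.\ set, we would shrink \(R\) and repeat; if it were finite, \(U\) would be essentially inside \(R\). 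So the dichotomy yields a computable \(g\) with \(U=g^{-1}(U)\) and \(\rng g\subseteq R\).

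\emph{Stage 2: make the reduction injective.} Compose: for each \(x\), search for some \(y\) with \(f(y)=g(x)\). This gives a many-one reduction of \(U\) to \(A\), and the remaining task is to make it injective. At stage \(x\) we have finitely many used values. We need a fresh \(y\) with \(f(y)\in U\iff g(x)\in U\), that is, \(f(y)\) in the same \(U\)-class as \(g(x)\). If the fibre \(f^{-1}(g(x))\) is infinite we just take a new element of it. This is exactly where D\"egtev's proof could lean on \(A\le_1 U\), where fibres are singletons and the issue is instead one-one-ness of \(g\). In our setting we handle it by modifying Stage 1 so that \(g\) takes values outside any given finite set. This again uses property \((R)\) together with the absence of simple sets: if no such ``escape'' were possible uniformly, we would obtain either a computable splitting of \(U\) or a simple set in its degree. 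Then \(f\) restricted to a computable choice of preimages of these fresh values is injective on the relevant range, and \(h=f^{-1}\circ g\), chosen greedily, is a one-one reduction.

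I expect the main obstacle to be Stage 1 and its uniform version needed in Stage 2. This is the point where the original argument starts from an injective \(A\to U\) map and identifies \(A\) with a subset of \(U\)'s domain. I would first try to verify that D\"egtev's use of the injection only ever involves the c.e.\ set \(\rng f\) and the \(U\)-membership of its elements, never the identity of preimages. If so, replacing ``\(A\) is a copy of \(U\cap R\)'' by ``\(A\eqm U\cap R\) via \(f\)'' should leave the construction unchanged, with preimages chosen by search.
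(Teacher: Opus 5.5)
Your two-stage plan (first reduce \(U\) to itself through values in \(\rng f\), then pull back along \(f\)) is salvageable. As written, however, neither stage is proved, and the sketch you give for Stage~1 does not work.

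\textbf{Stage 1.} Property \((R)\) cannot be applied to \(R=\rng f\), which need not contain \(U\). It has to be applied to \(U\cup\rng f\), and what makes it applicable is a fact you never state: \(\rng f\setminus U\) is not c.e. Otherwise \(\overline{A}=f^{-1}(\rng f\setminus U)\) would be c.e.\ and \(A\) computable. Property \((R)\) then gives a computable \(C\) with \(U\subseteq C\subseteq U\cup\rng f\). For \(x\in C\), a parallel search (wait for \(x\) to enter \(U\), or for an \(f\)-preimage of \(x\) to appear) produces the reduction directly. The no-simple-set hypothesis is not needed here at all. Your dichotomy about \(U\cup R\) being simple, ``shrinking \(R\) and repeating'', and \(U\cup R\eqm U\) is not an argument. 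Your preliminary claim is also false: \(g^{-1}(U)\) need not be a noncomputable piece of \(U\) for every computable \(g\) into \(\rng f\) (take \(g\) constant).

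\textbf{Stage 2.} This is the essential gap, since the content of the theorem lies here. Your concern about finite fibres is a red herring. The map \(n\mapsto\min\{y:f(y)=n\}\) is injective on \(\rng f\) whatever the fibre sizes. So non-injectivity comes only from the fixed padding values used when \(x\notin C\), or when \(x\) is seen in \(U\) before a preimage turns up.

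The missing idea is as follows. Take these padding values fresh from infinite computable sets \(A_1\subseteq A\) and \(A_2\subseteq\overline{A}\). Whenever the minimal preimage of \(n\) happens to lie in \(A_1\cup A_2\), send \(n\) to fresh padding of the matching kind instead. Then minimal preimages can collide neither with one another nor with padding values. The set \(A_1\) exists because \(A\) is infinite c.e. The set \(A_2\) exists precisely because \(A\eqm U\) (by the Stage~1 argument) is not simple; this is the only use of the hypothesis on the degree. Your sentence that otherwise ``we would obtain either a computable splitting of \(U\) or a simple set in its degree'' supplies none of this.

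If you prefer to keep the padding on the \(U\)-side inside \(\rng f\), note that \(f(A_2)\) may be finite, so you need two separate arguments. First, \(U\cap\rng f=f(A)\) is infinite c.e., since otherwise \(A\) would be computable. Second, \(C\setminus U\) has an infinite c.e.\ subset, since otherwise \(U\cup\overline{C}\) would be a simple set \(\eqm U\).
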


We give a complete modernized verification, including the step
\(A\eqm U\) used at the beginning of D\"egtev's proof and the verification
that the final reduction remains injective when the initial reduction is
not assumed to be one-one.

For \(D\)-maximal sets, property \((R)\) is the superset formulation of
\(D\)-maximality. Using the Cholak--Gerdes--Lange classification, we show
that the many-one degree of a \(D\)-maximal set contains a simple set if and
only if the set is of Type~1 or Type~2. Thus
\Cref{thm:strengthened-degtev} yields a uniform conclusion for all the
remaining types.

\begin{corollary}\label{cor:cgl-one-one}
Let \(D\) be a noncomputable \(D\)-maximal set of one of Types~3--10. If \(A\) is
noncomputable and \(A\lem D\), then
\[
D\le_1 A.
\]
\end{corollary}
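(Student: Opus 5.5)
The corollary is a direct application of \Cref{thm:strengthened-degtev} with \(U=D\). So the plan is to verify the theorem's three hypotheses for a noncomputable \(D\)-maximal set \(D\) of one of Types~3--10.

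\textbf{Step 1.} \(D\) is c.e.\ and noncomputable, by the definition of \(D\)-maximality and by assumption.

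\textbf{Step 2.} \(D\) satisfies property \((R)\). As noted in the introduction, \((R)\) is the superset formulation of \(D\)-maximality. I would make this precise by recalling that \(D\) is \(D\)-maximal when, for every c.e.\ \(W\supseteq D\), there is a c.e.\ set \(V\) that is either disjoint from \(D\) or contained in \(D\) such that \(W\subseteq D\cup V\) (or its complement-based variant). Then I would check that D\"egtev's condition \((R)\) is literally this statement, or follows from it by a routine translation between the superset form and the splitting form.

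\textbf{Step 3.} The many-one degree of \(D\) contains no simple set. This is the substantive input, and I expect it to be the main obstacle. I would prove it from the Cholak--Gerdes--Lange classification, as announced in the introduction: the \(m\)-degree of a \(D\)-maximal set contains a simple set if and only if the set is of Type~1 or~2. Only the direction ``Types~3--10 give no simple set in the \(m\)-degree'' is needed here.

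My approach for this direction would go as follows.
\begin{itemize}
\item For each of Types~3--10, the classification provides a nontrivial c.e.\ structure: an infinite c.e.\ set disjoint from \(D\), or a decomposition of \(\overline{D}\) involving infinitely many c.e.\ pieces disjoint from \(D\).
\item I would show that this structure is preserved backwards under many-one reductions: if \(S\lem D\) via \(f\) and \(D\lem S\) via \(g\), then the structure pulls back along \(g\). In particular, take an infinite c.e.\ set \(W\) disjoint from \(D\). Then \(g^{-1}(W)\) is a c.e.\ set disjoint from \(S\).
\item The key point is to show that \(g^{-1}(W)\) is infinite. For this I would use \(S\lem D\) and the noncomputability of \(D\): if \(g^{-1}(W)\) were finite for every such \(W\), one could argue that \(D\) is captured by a finite-type decomposition, which is exactly Types~1--2. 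That contradicts the type assumption.
\end{itemize}
I am unsure whether this needs a case-by-case treatment of Types~3--10. I would first try a uniform argument using a single infinite c.e.\ subset of \(\overline{D}\) that meets every many-one preimage, which the classification presumably guarantees outside Types~1--2.

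\textbf{Step 4.} With all hypotheses checked, \Cref{thm:strengthened-degtev} applied to any noncomputable \(A\lem D\) gives \(D\le_1 A\).
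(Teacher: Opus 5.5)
Your outer structure (Steps 1, 2 and 4: \(D\) is c.e.\ and noncomputable, it has property \((R)\) by the superset form of \(D\)-maximality \cite[Lemma~2.5]{CGL}, and then \Cref{thm:strengthened-degtev} applies) is exactly the paper's. The gap is Step 3. You correctly flag it as the real content, but you do not supply it.

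First, the pullback is along the wrong map. With \(g\) reducing \(D\) to \(S\), the set \(g^{-1}(W)\) lives on the \(D\)-side and need not be disjoint from \(S\). The right object is \(f^{-1}(W)\) for \(f\) reducing \(S\) to \(D\), which is a c.e.\ subset of \(\overline S\).

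More seriously, everything hinges on the implication \emph{if \(f^{-1}(W)\) is finite for every infinite c.e.\ \(W\subseteq\overline D\), then \(D\) is of Type~1 or~2}. Saying that \(D\) is ``captured by a finite-type decomposition'' gives no mechanism for this.

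Your fallback of one fixed \(W\) meeting every reduction cannot work as stated either. Suppose some computable \(C\supseteq D\) has \(C\cap W\) finite. Let \(e\) enumerate \(C\) increasingly and put \(S=e^{-1}(D)\). Then \(S\eqm D\): for \(D\lem S\), send points outside \(C\) to \(e^{-1}(c)\) for a fixed \(c\in C\setminus D\), which exists because \(D\) is noncomputable. Moreover \(e\) reduces \(S\) to \(D\), yet \(e^{-1}(W)\) is finite. So a fixed \(W\) would have to be computably inseparable from \(D\) modulo finite sets. You only presume the classification provides this; in general \(W\) must depend on the reduction.

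The missing idea, which is the converse half of the proof of \Cref{prop:simple-degree}, is to let property \((R)\) build the relevant computable set out of the reduction itself.
\begin{enumerate}[label=(\alph*)]
\item Take a simple \(S\eqm D\) and \(f\) reducing \(S\) to \(D\), and put \(F=\rng(f)\). Then \(F\setminus D\) is not c.e., since otherwise \(\overline S=f^{-1}(F\setminus D)\) would be c.e.
\item Applying \((R)\) to \(D\cup F\) gives a computable \(C\) with \(D\subseteq C\subseteq D\cup F\).
\item Any infinite c.e.\ \(E\subseteq C\setminus D\) lies inside \(F\). Taking least \(f\)-preimages of its elements yields an infinite c.e.\ subset of \(\overline S\), contradicting simplicity. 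So \(D\) is simple in \(C\). This \(E\) is the \(W\) you were looking for, and it depends on \(f\).
\item A second application of \((R)\) upgrades this to \(D\) being maximal in \(C\), so \(D\cup\overline C\) is maximal.
\item Then \cite[Theorem~4.1(1),(2)]{CGL} puts \(D\) in Type~1 or Type~2, according as \(\overline C\) is finite or infinite.
\end{enumerate}
This treats Types~3--10 uniformly, with no case analysis of their internal structure. The only input from the classification is that Types~1--2 are recognised through maximal sets of the form \(D\cup R\) with \(R\) computable.
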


For Types~1--2, Maslova's theorem on simple sets supplies the finite-one
conclusion. Consequently, the result proved independently in the first
version follows from Maslova's theorem, the strengthened reading of
D\"egtev's argument established here, and the CGL classification.

\begin{corollary}\label{cor:finite-one}
Every noncomputable c.e.\ many-one degree containing a \(D\)-maximal set
contains a least finite-one degree.
\end{corollary}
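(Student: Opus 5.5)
The proof is a case split along the Cholak--Gerdes--Lange classification. In each case I exhibit one set in the given many-one degree that finite-one reduces to every member of that degree; its finite-one degree is then the least one. Fix a noncomputable c.e.\ many-one degree \(\mathbf a\), and let \(D\in\mathbf a\) be \(D\)-maximal. Since \(\mathbf a\) is noncomputable, \(D\) is noncomputable, and by the classification \(D\) has one of Types~1--10. Recall that the finite-one degrees contained in \(\mathbf a\) are exactly the finite-one degrees of the members of \(\mathbf a\), because \(\le_1\) implies \(\lfo\) and \(\lfo\) implies \(\lem\). So it suffices to find some \(S\in\mathbf a\) with \(S\lfo A\) for every \(A\in\mathbf a\).

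\textbf{Types~3--10.} Here I use \Cref{cor:cgl-one-one} directly. Every \(A\in\mathbf a\) is noncomputable and satisfies \(A\lem D\), so the corollary gives \(D\le_1 A\), and in particular \(D\lfo A\). Hence the finite-one degree of \(D\) lies in \(\mathbf a\) and is below every finite-one degree in \(\mathbf a\). It is therefore the least one. This case is immediate and uses none of the internal structure of D\"egtev's argument.

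\textbf{Types~1--2.} I invoke the equivalence established in the paper: a noncomputable \(D\)-maximal set has a simple set in its many-one degree if and only if it is of Type~1 or~2. So there is a simple set \(S\in\mathbf a\). I then apply Maslova's theorem for simple sets, in the form: if \(S\) is simple and \(A\eqm S\), then \(S\lfo A\). This makes the finite-one degree of \(S\) least in \(\mathbf a\).

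The only real obstacle is this second case. One must check that Maslova's result, as stated in the literature, really yields \(S\lfo A\) for \emph{every} \(A\eqm S\), and in particular for non-c.e.\ \(A\) in \(\mathbf a\), which cannot occur since \(\mathbf a\) is c.e. If her statement is phrased only for c.e.\ \(A\), this is harmless, because every member of a c.e.\ many-one degree is c.e. If instead it is phrased as "\(A\lem S\) implies \(S\lfo A\) for noncomputable \(A\)", the conclusion follows a fortiori. Beyond this bookkeeping, and the CGL-based equivalence that is proved separately in the paper, the corollary is a direct assembly of the two cases.
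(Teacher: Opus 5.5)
Your proposal is correct and follows the paper's proof essentially step for step. Types~3--10 are handled by applying \Cref{cor:cgl-one-one} to an arbitrary $B\eqm D$. Types~1--2 are handled by exhibiting a simple set in the degree ($D$ itself, or $D\sqcup R$ as in the proof of \Cref{prop:simple-degree}) and then invoking Maslova's theorem; only the easy direction of that proposition is actually needed.
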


It would be interesting to determine for which further natural classes of
many-one degrees the existence of a least finite-one degree still holds.

\section{Preliminaries}

All sets are subsets of $\omega$. For sets $A,B\subseteq\omega$, we write
$A\lem B$ if there is a total computable function $f$ such that
\[
x\in A \iff f(x)\in B
\]
for every $x$. If $f$ can be chosen injective, we write $A\le_1 B$. We
write $A\lfo B$ if $f$ can be chosen with all fibres finite. Thus
\[
A\le_1 B \Longrightarrow A\lfo B \Longrightarrow A\lem B.
\]
For \(\rho\in\{1,\mathrm m,\mathrm{fo}\}\), write
\[
A\equiv_\rho B
\]
if
\[
A\le_\rho B
\qquad\text{and}\qquad
B\le_\rho A.
\]
The equivalence classes induced by \(\equiv_\rho\) are the
\(\rho\)-degrees.

We say that \(A\) represents the least finite-one degree inside its
many-one degree if
\[
A\lfo B
\qquad\text{for every }B\eqm A.
\]

A c.e. set $S$ is \emph{simple} if $\overline S$ is infinite and contains no
infinite c.e. subset.

We shall use the following formulation of D\"egtev's property $(R)$.

\begin{definition}\label{def:R}
A noncomputable c.e. set $U$ satisfies \emph{property $(R)$} if, for every
c.e. set $B\supseteq U$, whenever $B\setminus U$ is not c.e. there exists a
computable set $R$ such that
\[
U\subseteq R\subseteq B.
\]
\end{definition}

This is exactly the superset formulation of $D$-maximality used by
Cholak--Gerdes--Lange; see \cite[Definition~2.4 and Lemma~2.5]{CGL}.
Throughout this paper, \(D\)-maximal sets are understood to be
noncomputable.

\section{A strengthened form of D\"egtev's argument}

We first make explicit a fact used at the beginning of D\"egtev's proof.

\begin{lemma}\label{lem:minimal-m}
Let $U$ be a noncomputable c.e. set satisfying property $(R)$. If $A$ is
noncomputable and $A\lem U$, then $U\lem A$. Hence $A\eqm U$.
\end{lemma}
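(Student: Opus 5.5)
The plan is to apply property \((R)\) (\Cref{def:R}) to the c.e.\ superset of \(U\) obtained by adding the range of the reduction. That yields a computable set \(R\) which lets us decide effectively when a point is guaranteed to be either in \(U\) or in the range of the reduction. On such points the reduction can be inverted by a search.

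Fix a total computable \(f\) with \(x\in A\iff f(x)\in U\). Put
\[
B=U\cup \rng(f).
\]
This set is c.e.\ and contains \(U\). I first show that \(B\setminus U\) is not c.e. Note that \(B\setminus U=\rng(f)\setminus U=f[\overline A]\). If this set were c.e., then
\[
\overline A=f^{-1}[B\setminus U]
\]
would be c.e. On the other hand, \(A=f^{-1}[U]\) is always c.e. So \(A\) would be computable, contradicting the hypothesis. Property \((R)\) therefore provides a computable \(R\) with \(U\subseteq R\subseteq U\cup\rng(f)\).

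Since \(A\) is noncomputable, it is neither \(\emptyset\) nor \(\omega\). Fix \(a_1\in A\) and \(a_0\notin A\). Define \(g\) as follows.
\begin{itemize}
\item If \(x\notin R\), let \(g(x)=a_0\). This case is decidable because \(R\) is computable.
\item If \(x\in R\), run two searches simultaneously: enumerate \(U\), and search for \(y\) with \(f(y)=x\).
\begin{itemize}
\item If \(x\) appears in \(U\) first, output \(a_1\).
\item If a \(y\) with \(f(y)=x\) is found first, output that \(y\).
\end{itemize}
\end{itemize}

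The search terminates because \(x\in R\subseteq U\cup\rng(f)\), so \(g\) is total computable. It remains to check that \(g\) is a reduction of \(U\) to \(A\).
\begin{itemize}
\item If \(x\notin R\), then \(x\notin U\), since \(U\subseteq R\). Also \(g(x)=a_0\notin A\), so the equivalence holds.
\item If the search ends with \(x\in U\), then \(g(x)=a_1\in A\), so the equivalence holds.
\item If the search ends with \(f(y)=x\), then \(g(x)=y\), and \(y\in A\iff f(y)\in U\iff x\in U\).
\end{itemize}
Hence \(U\lem A\), and together with \(A\lem U\) we get \(A\eqm U\).

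There is no serious obstacle. The only point needing care is the verification that \(B\setminus U\) is not c.e. This is exactly where noncomputability of \(A\) enters, through the observation that \(f^{-1}[U]\) and \(f^{-1}[B\setminus U]\) partition \(\omega\) into \(A\) and \(\overline A\). Injectivity of \(f\) is never used: any preimage \(y\) found by the search works equally well.
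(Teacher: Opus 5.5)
Your proof is correct and follows the paper's argument essentially step for step: you show $\rng(f)\setminus U$ is not c.e. because its $f$-preimage is $\overline A$, apply property $(R)$ to $U\cup\rng(f)$ to get a computable set between $U$ and $U\cup\rng(f)$, and then invert $f$ by a search run in parallel with the enumeration of $U$. The only cosmetic difference is that the paper fixes a tie-breaking convention for the parallel search, which you leave implicit; any deterministic choice works, since both outcomes give a correct value.
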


\begin{proof}
Let $f$ be a many-one reduction of $A$ to $U$, and put
\[
F:=\rng(f).
\]
Since $U$ is c.e., the set $A$ is c.e. We first claim that $F\setminus U$
is not c.e. Indeed, for every $x$,
\[
x\notin A
\iff f(x)\notin U
\iff f(x)\in F\setminus U.
\]
Thus
\[
\overline A=f^{-1}(F\setminus U).
\]
If $F\setminus U$ were c.e., then $\overline A$ would be c.e., and hence $A$
would be computable, a contradiction.

Apply property $(R)$ to the c.e. set $U\cup F$. There is a computable set
$C$ such that
\[
U\subseteq C\subseteq U\cup F.
\]
Fix $a\in A$ and $b\notin A$, which exist because $A$ is noncomputable.
We define a total computable function $h$. On input $x$, if $x\notin C$,
set $h(x)=b$. If $x\in C$, run in parallel an enumeration of $U$ and the
search for a $y$ such that $f(y)=x$. Since $C\subseteq U\cup F$, at least
one of these searches terminates. If $x$ enters $U$ first, set $h(x)=a$;
if a preimage $y$ is found first, set $h(x)=y$ (ties may be resolved in
favour of the first alternative).

If $x\notin C$, then $x\notin U$ and $h(x)=b\notin A$. If $x\in C$ and
$h(x)=a$, then $x\in U$ and $h(x)\in A$. Finally, if $h(x)=y$ with
$f(y)=x$, then
\[
h(x)=y\in A \iff f(y)=x\in U.
\]
Therefore
\[
x\in U \iff h(x)\in A
\]
for all $x$, so $U\lem A$.
\end{proof}

\begin{proof}[Proof of \Cref{thm:strengthened-degtev}]
Fix a total computable many-one reduction $f$ of $A$ to $U$. By
\Cref{lem:minimal-m}, $A\eqm U$. Hence $A$ is not simple, since by
hypothesis the many-one degree of $U$ contains no simple set.

The set $A$ is infinite and c.e., so choose an infinite computable set
\[
A_1\subseteq A.
\]
Since $A$ is noncomputable, $\overline A$ is infinite; since $A$ is not
simple, $\overline A$ contains an infinite c.e. subset, and therefore an
infinite computable set
\[
A_2\subseteq\overline A.
\]
Put $F=\rng(f)$. As in the proof of \Cref{lem:minimal-m}, $F\setminus U$ is
not c.e. Applying property $(R)$ to $U\cup F$, choose a computable set
$A_3$ such that
\[
U\subseteq A_3\subseteq U\cup F.
\]
The set $A_3$ is infinite because it contains the noncomputable c.e. set $U$.

We construct a total computable function $g$ recursively, preserving the
following properties after $g(0),\ldots,g(n-1)$ have been defined:
\begin{enumerate}[label=(\roman*),leftmargin=2.4em]
\item the values $g(0),\ldots,g(n-1)$ are pairwise distinct;
\item for every $x<n$,
\[
x\in U \iff g(x)\in A;
\]
\item if $x<n$ and $g(x)\notin A_1\cup A_2$, then
\[
g(x)=\min\{y:f(y)=x\}.
\]
\end{enumerate}

Suppose $g$ has been defined below $n$.

If $n\notin A_3$, then $n\notin U$. Let $g(n)$ be the least element of
$A_2$ not among $g(0),\ldots,g(n-1)$. This value exists because $A_2$ is
infinite. It is fresh, belongs to $\overline A$, and therefore preserves
(i)--(iii).

Assume now that \(n\in A_3\). Run an enumeration of \(U\) in parallel
with the sequential search, in increasing order of \(y\), for the least
\(y\) such that \(f(y)=n\). Since
\[
A_3\subseteq U\cup F,
\]
at least one of the two searches terminates (ties are resolved in favour
of the first alternative).

If $n$ enters $U$ first, let $g(n)$ be the least unused element of $A_1$.
Then $g(n)\in A$, the value is fresh, and (i)--(iii) are preserved.

Otherwise, suppose that
\[
y_0:=\min\{y:f(y)=n\}
\]
is found first. There are three cases.

If $y_0\in A_1$, then $y_0\in A$, so $n=f(y_0)\in U$. Let $g(n)$ be the
least unused element of $A_1$.

If $y_0\in A_2$, then $y_0\notin A$, so $n=f(y_0)\notin U$. Let $g(n)$ be
the least unused element of $A_2$.

Finally, suppose $y_0\notin A_1\cup A_2$, and set
\[
g(n):=y_0.
\]
Then
\[
g(n)\in A \iff f(g(n))=n\in U,
\]
so (ii) holds, and (iii) holds by definition. It remains only to check that
$y_0$ has not been used before. 
If \(y_0=g(x)\) for some \(x<n\), then
\(g(x)\notin A_1\cup A_2\), and the inductive hypothesis (iii) gives
\[
f(g(x))=x.
\]
Hence
\[
f(y_0)=x.
\]
But \(f(y_0)=n\), so \(x=n\), contradicting \(x<n\).
Thus the new value is fresh and (i) is also preserved.

This completes the construction. Each stage is effective: membership in
$A_1,A_2,A_3$ is decidable, unused elements of $A_1$ and $A_2$ can be found
effectively, and the parallel search terminates because
$A_3\subseteq U\cup F$. Hence $g$ is total computable. By (i), it is
injective, and by (ii),
\[
x\in U \iff g(x)\in A
\]
for every $x$. Therefore $U\le_1 A$.
\end{proof}

\begin{remark}[A slightly more general form]
The proof actually uses the hypothesis that the many-one degree of \(U\)
contains no simple set only to ensure that \(A\) is not simple. Thus the
same argument proves the slightly more general statement that, if \(U\)
satisfies property \((R)\), \(A\lem U\), and \(A\) is noncomputable and
nonsimple, then \(U\le_1 A\).
\end{remark}

\begin{remark}[On D\"egtev's proof]\label{rem:degtev-text}
The proof of Assertion~1 \cite[pp.~751--752]{Degtev76} starts with a noncomputable set
\(A\le_1 U\). The preceding proof shows that the injectivity of this
initial reduction is not used: a many-one reduction suffices. The English
version consulted here states that \(f(\mathbb N)\setminus U\) is r.e.;
however, both the argument and the application of property \((R)\)
immediately following it require this set to be \emph{not} r.e., as proved
above.
\end{remark}

\section{Translation into the CGL classification}

We use the standard facts that, among \(D\)-maximal sets, Type~1 is
exactly the simple case, and Type~2 is characterized by the existence of
an infinite computable set
\[
R\subseteq\overline D
\]
such that \(D\sqcup R\) is simple; see
\cite[Theorem~3.10, Definition~3.11, and Lemma~3.17]{CGL}.

For \(D\)-maximal sets we shall also use
\cite[Theorem~4.1(1),(2)]{CGL}.

\begin{proposition}\label{prop:simple-degree}
Let $D$ be a $D$-maximal set. The many-one degree of $D$ contains a simple set if
and only if $D$ is of Type~1 or Type~2. Consequently, the $D$-maximal sets
of Types~3--10 are exactly those whose many-one degree contains no simple
set.
\end{proposition}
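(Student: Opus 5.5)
We prove the two implications separately; the ``consequently'' clause is then just a restatement, since by the classification every $D$-maximal set has exactly one of Types~1--10.

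\emph{Types~1--2 give a simple set in the degree.} If $D$ is of Type~1, then $D$ is itself simple (Type~1 is exactly the simple case), so its many-one degree contains a simple set. If $D$ is of Type~2, fix the infinite computable $R\subseteq\overline D$ such that $D\sqcup R$ is simple, as in \cite[Lemma~3.17]{CGL}. We check that $D\sqcup R\eqm D$.
\begin{itemize}
\item For $D\sqcup R\lem D$: send $2x\mapsto x$, and send $2x+1$ to a fixed element of $D$ if $x\in R$ and to a fixed element of $\overline D$ otherwise. This is computable because $R$ is computable.
\item For $D\lem D\sqcup R$: use $x\mapsto 2x$.
\end{itemize}
Both fixed elements exist because $D$ is noncomputable. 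Hence the many-one degree of $D$ contains the simple set $D\sqcup R$.

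\emph{A simple set in the degree forces Type~1 or Type~2.} Suppose $S\eqm D$ with $S$ simple. We aim to show $D$ is of Type~1 or~2. By \Cref{lem:minimal-m}, or directly from the hypothesis, we have $S\lem D$ and $D\lem S$ via computable $f,h$. I would use \cite[Theorem~4.1(1),(2)]{CGL}. I expect these items say, roughly, that a $D$-maximal set $D$ is of Type~1 or~2 precisely when $\overline D$ contains no infinite c.e.\ set disjoint from a suitable computable part, or else express such a characterization in terms of the degree.

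The concrete route I would try first is by contradiction. Assume $D$ is of Type~$\ge3$. Then by the CGL classification (Lemma~3.17 negated) there is no infinite computable $R\subseteq\overline D$ with $D\sqcup R$ simple. Typically this is witnessed by an infinite c.e.\ set $W\subseteq\overline D$ that cannot be absorbed by a computable set. We then pull $W$ back along the reduction $f\colon S\lem D$. If $\rng(f)\cap W$ is infinite, then $f^{-1}(W)$ is an infinite c.e.\ subset of $\overline S$, contradicting simplicity of $S$. So $W$ must be almost avoided by $\rng(f)$. Then property $(R)$ applied to $D\cup\rng(f)$, exactly as in \Cref{lem:minimal-m}, yields a computable $C$ with $D\subseteq C\subseteq D\cup\rng(f)$. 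Consequently $\overline C\subseteq\overline D$ is computable and almost contains $W$.

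Iterating this, I would show that $\overline D$ modulo a computable set behaves like the complement of a simple set. That is, $D\cup\overline C$ restricted appropriately is simple, which is the Type~2 characterization (or Type~1 if $\overline C$ is finite).

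The main obstacle is this last step: matching the ``no infinite c.e.\ subset of $\overline S$'' information to the precise invariant separating Types~1--2 from Types~3--10 in CGL. I would lean on \cite[Theorem~4.1(1),(2)]{CGL}, hoping it states directly that for $D$-maximal $D$, being Type~1 or~2 is equivalent to the $\mathcal E^*$/degree-theoretic condition needed, so that the pullback argument above only has to verify that condition.
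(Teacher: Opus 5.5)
\paragraph{Converse direction.} Your ingredients are the right ones, and they are the paper's. You use the pullback: an infinite c.e.\ $W\subseteq\overline D$ meeting $\rng(f)$ infinitely often would make $f^{-1}(W)$ an infinite c.e.\ subset of $\overline S$. You then apply property $(R)$ once to $D\cup\rng(f)$, which gives a computable $C$ with $D\subseteq C\subseteq D\cup\rng(f)$. No iteration is needed. $C$ does not depend on $W$, so every c.e.\ subset of $C\setminus D\subseteq\rng(f)\setminus D$ is finite, while $C\setminus D$ is infinite because $D$ is noncomputable. Hence $D\cup\overline C$ is simple outright.

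\paragraph{The gap.} The gap is the step you flag yourself and leave as a hope: passing from ``$D\sqcup R$ is simple for some computable $R\subseteq\overline D$'' to ``$D$ is of Type~1 or~2''. Your opening move already assumes this. The contrapositive of CGL Lemma~3.17 is ``no such $R$ $\Rightarrow$ not Type~2'', not ``Type~$\ge 3$ $\Rightarrow$ no such $R$''. Moreover, CGL Theorem~4.1(1),(2), which you plan to lean on, is about \emph{maximality}, not simplicity.

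\paragraph{The missing idea.} Apply property $(R)$ a second time to upgrade simplicity to maximality. Let $W$ be c.e.\ with $D\subseteq W\subseteq C$.
\begin{itemize}
\item If $W\setminus D$ is c.e., it is a c.e.\ subset of $C\setminus D$, hence finite.
\item Otherwise $(R)$ gives a computable $H$ with $D\subseteq H\subseteq W$. Then $C\setminus H$ is a computable subset of $C\setminus D$, hence finite, and so $C\setminus W$ is finite.
\end{itemize}
Thus $D$ is a maximal subset of $C$, and one checks directly that $D\cup\overline C$ is a maximal set. If $\overline C$ is infinite, Theorem~4.1(2) gives Type~2. If $\overline C$ is finite, $D$ is a finite variant of a maximal set, hence maximal, and Theorem~4.1(1) gives Type~1.

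Alternatively, the paper's preliminaries describe Type~2 as \emph{characterized by} the existence of such an $R$. If you take that as a two-way statement, simplicity of $D\cup\overline C$ already finishes the proof, but you must then assert it rather than hope for it.

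\paragraph{Forward direction.} There is a smaller error here: you read $D\sqcup R$ as the join $\{2x:x\in D\}\cup\{2x+1:x\in R\}$. It denotes the disjoint union $D\cup R$, which is why $R\subseteq\overline D$ is required. The join can never be simple, since its complement contains the infinite computable set $\{2x+1:x\notin R\}$ (recall $\overline R\supseteq D$ is infinite). So your reductions certify the equivalence for the wrong set.

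The fix is easy. Take $a\in D$ and $b\notin D\cup R$, and use the identity off $R$ together with a constant on $R$:
\begin{itemize}
\item the constant $b$ on $R$ gives $D\lem D\cup R$;
\item the constant $a$ on $R$ gives $D\cup R\lem D$.
\end{itemize}
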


\begin{proof}
If \(D\) is of Type~1, then \(D\) itself is simple.
Suppose now that \(D\) is of Type~2.
Choose an infinite computable \(R\subseteq\overline D\) such that
\[
S:=D\sqcup R
\]
is simple.

Since $D$ is noncomputable, choose $a\in D$, and since $S$ is
simple choose $b\notin S$. Define
\[
\alpha(x)=
\begin{cases}
x,&x\notin R,\\
b,&x\in R,
\end{cases}
\qquad
\beta(x)=
\begin{cases}
x,&x\notin R,\\
a,&x\in R.
\end{cases}
\]
Then $\alpha$ reduces $D$ to $S$ and $\beta$ reduces $S$ to $D$, so
$D\eqm S$. Thus the many-one degree of $D$ contains a simple set.

Conversely, suppose that a simple set $S$ satisfies $S\eqm D$. Fix a
many-one reduction $f$ of $S$ to $D$, and put $F=\rng(f)$. As before,
$F\setminus D$ is not c.e., since otherwise
\[
\overline S=f^{-1}(F\setminus D)
\]
would be c.e. Since $D$ is $D$-maximal, property $(R)$ gives a computable
set $C$ such that
\[
D\subseteq C\subseteq D\cup F.
\]
The set $C\setminus D$ is infinite; otherwise $D$ would be a finite variant
of the computable set $C$ and hence computable.

We claim that $D$ is simple in $C$, i.e. that $C\setminus D$ contains no
infinite c.e. subset. Suppose, towards a contradiction, that $E$ is an
infinite c.e. subset of $C\setminus D$. Since
\[
C\setminus D\subseteq F\setminus D,
\]
every element of $E$ has an $f$-preimage. Enumerate $E$ without repetition
as $e_0,e_1,\ldots$ and, for each $i$, let
\[
y_i:=\min\{y:f(y)=e_i\}.
\]
Then the $y_i$ are pairwise distinct, form an infinite c.e. set, and satisfy
$y_i\notin S$ because $f(y_i)=e_i\notin D$. This contradicts the simplicity
of $S$. Hence $D$ is simple in $C$.

It follows that \(D\) is a maximal subset of \(C\). Indeed, let
\(D\subseteq W\subseteq C\) be c.e. If \(W\setminus D\) is c.e., then
simplicity in \(C\) makes it finite. Otherwise property \((R)\) gives a
computable set \(H\) with
\[
D\subseteq H\subseteq W.
\]
Then the computable set
\[
C\setminus H\subseteq C\setminus D
\]
is finite, and hence \(C\setminus W\) is finite.

Put \(R_0:=\overline C\). Then \(R_0\) is computable. We claim that
\(D\cup R_0\) is maximal. Indeed,
\[
\overline{D\cup R_0}=C\setminus D
\]
is infinite, since \(D\) is simple in \(C\). Let \(W\) be c.e.\ with
\[
D\cup R_0\subseteq W.
\]
Then
\[
D\subseteq W\cap C\subseteq C.
\]
Since \(D\) is a maximal subset of \(C\), either
\[
(W\cap C)\setminus D
\]
is finite or
\[
C\setminus W
\]
is finite. In the first case,
\[
W\setminus(D\cup R_0)=(W\cap C)\setminus D
\]
is finite, while in the second case
\[
\overline W=C\setminus W
\]
is finite. Hence \(D\cup R_0\) is maximal.
If \(R_0\) is infinite, \cite[Theorem~4.1(2)]{CGL} implies that
\(D\) is of Type~2. If \(R_0\) is finite, then \(D\) is a finite variant of
the maximal set \(D\cup R_0\), hence is itself maximal; therefore
\cite[Theorem~4.1(1)]{CGL} implies that \(D\) is of Type~1.
This proves the converse.
\end{proof}

\begin{proof}[Proof of \Cref{cor:cgl-one-one}]
Every $D$-maximal set satisfies property $(R)$ by
\cite[Definition~2.4 and Lemma~2.5]{CGL}. By
\Cref{prop:simple-degree}, if $D$ is of one of Types~3--10, then its
many-one degree contains no simple set. The conclusion now follows from
\Cref{thm:strengthened-degtev}.
\end{proof}

In particular, the published conclusion of D\"egtev's Assertion~1 is
recovered in the following CGL formulation.

\begin{corollary}\label{cor:minimal-one}
Every $D$-maximal set of one of Types~3--10 has minimal $1$-degree.
\end{corollary}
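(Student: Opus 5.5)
Minimality of the $1$-degree of $D$ means: whenever $A\le_1 D$ and $A$ is not of lower $1$-degree than $D$ in a trivial sense, i.e.\ whenever $A$ is noncomputable with $A\le_1 D$, we get $A\equiv_1 D$. (Computable sets sit strictly below and are excluded, as in D\"egtev's formulation, since $D$ is noncomputable.) The plan is to derive this immediately from \Cref{cor:cgl-one-one}.

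Let $D$ be a $D$-maximal set of one of Types~3--10; by our convention $D$ is noncomputable. Take any noncomputable $A$ with $A\le_1 D$. Since a one-one reduction is in particular a many-one reduction, $A\lem D$. \Cref{cor:cgl-one-one} now applies and gives $D\le_1 A$. Together with $A\le_1 D$ this yields $A\equiv_1 D$. Thus no noncomputable set lies strictly below $D$ in the $1$-degrees, which is exactly minimality of the $1$-degree of $D$ among noncomputable $1$-degrees.

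There is no real obstacle here. The one point to state carefully is what ``minimal'' means: it refers to the noncomputable $1$-degrees, or equivalently to degrees above the computable ones. I would also note that the corollary in fact gives more than minimality, since it applies to every noncomputable $A$ that is merely many-one below $D$, not only one-one below. All of the substantive work sits in \Cref{thm:strengthened-degtev} and \Cref{prop:simple-degree}, which \Cref{cor:cgl-one-one} has already combined.
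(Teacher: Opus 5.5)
Your proof is correct and is exactly the paper's argument: a noncomputable $A\le_1 D$ satisfies $A\lem D$, so \Cref{cor:cgl-one-one} gives $D\le_1 A$, hence $A\equiv_1 D$. Your reading of ``minimal'' as minimal among noncomputable $1$-degrees matches the paper's usage.
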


\begin{proof}
Let $A\le_1 D$ be noncomputable. Then $A\lem D$, so
\Cref{cor:cgl-one-one} gives $D\le_1 A$. Hence $A\equiv_1 D$.
\end{proof}

\section{The finite-one consequence}

We use Maslova's theorem that every noncomputable c.e. simple set represents
the least finite-one degree inside its many-one degree \cite{Maslova79}; see
also \cite[p.~3]{RSZ}.

\begin{proof}[Proof of \Cref{cor:finite-one}]
Let $\mathbf d$ be a noncomputable c.e. many-one degree containing a
$D$-maximal set $D$.

If $D$ is of Type~1, then $D$ is simple, so Maslova's theorem applies. If
$D$ is of Type~2, the proof of \Cref{prop:simple-degree} gives a simple set
$S\eqm D$, and Maslova's theorem applied to $S$ yields a least finite-one
degree inside $\mathbf d$.

Suppose finally that $D$ is of one of Types~3--10. Let $B\eqm D$. Then $B$
is noncomputable, and in particular $B\lem D$. By
\Cref{cor:cgl-one-one},
\[
D\le_1 B,
\]
so $D\lfo B$. Since $B\eqm D$ was arbitrary, $D$ itself represents the
least finite-one degree inside $\mathbf d$.
\end{proof}

\section*{Acknowledgments}

The author is grateful to V.~Yu.~Shavrukov for pointing out the stronger
one-one statement for Types~3--10, for drawing attention to D\"egtev's
1976 paper and suggesting that the stronger conclusion is essentially
implicit in D\"egtev's proof, and for sharing an alternative argument for
the Type~3--10 case. These observations prompted a substantial revision
of the present paper.

The first version of this paper was developed with extensive use of
ChatGPT Pro (OpenAI) and Gemini Deep Think (Google DeepMind) in exploratory
analysis and technical verification. In preparing the present
substantially revised version, ChatGPT with the GPT-5.6 Sol model (OpenAI)
was used primarily for technical checking and manuscript preparation.
The author has reworked and verified the arguments presented here and bears
sole responsibility for the mathematical content of this version.

\end{document}